\definecolor{purple}{rgb}{0.5, 0.0, 0.5}
\theoremstyle{plain}
\newtheorem{theorem}{Theorem}[section]
\newtheorem{lemma}[theorem]{Lemma}
\theoremstyle{definition}
\newtheorem{definition}[theorem]{Definition}
\theoremstyle{remark}
\newtheorem{remark}[theorem]{Remark}
\newtheorem{example}[theorem]{Example}
\numberwithin{equation}{section}
\newcommand{\lgw}{\longrightarrow}
\newcommand{\lgm}{\longmapsto}
\newcommand{\si}{\sigma}
\newcommand{\wdh}{\widehat}
\newcommand{\D}{\Delta}
\newcommand{\ord}{\operatorname{ord}}
\newcommand{\Z}{\mathbb{Z}}
\newcommand{\kk}{\Bbbk}
\newcommand{\R}{\mathbb{R}}
\newcommand{\K}{\mathbb{K}}
\newcommand{\N}{\mathbb{N}}
\newcommand{\C}{\mathbb{C}}
\newcommand{\Q}{\mathbb{Q}}
\renewcommand{\t}{\tau}
\renewcommand{\lg}{\langle}
\newcommand{\rg}{\rangle}
\newcommand{\lb}{\llbracket}
\newcommand{\rb}{\rrbracket}
\renewcommand{\phi}{\varphi}
\renewcommand{\o}{\omega}
\let\mathscr\mathcal
\begin{document}
\title[P{\l}oski Approximation Theorem]{P{\l}oski Approximation Theorem}

\author{Adam Parusi\'nski}
\email{adam.parusinski@univ-cotedazur.fr}
\address{Universit\'e Côte d'Azur, CNRS, LJAD, UMR 7351, 06108 Nice, France}

\dedicatory{To the memory of Arkadiusz P{\l}oski}

\author{Guillaume Rond}
\email{guillaume.rond@univ-amu.fr}
\address{Aix-Marseille Universit\'e, CNRS, Centrale Marseille, I2M, UMR 7373, 13453 Marseille, France}

\thanks{This research was funded, in whole or in part, by l’Agence Nationale de la Recherche (ANR), project ANR-22-CE40-0014. For the purpose of open access, the author has applied a CC-BY public copyright licence to any Author Accepted Manuscript (AAM) version arising from this submission.}


\keywords{Artin approximation, Zariski equisingularity, Algebraization of analytic and meromorphic function germs.}

\subjclass[2010]{
14B12, 
32B10,    
32S15,  
14B05. 
32S05. 
}

\begin{abstract}
  The aim of this paper is to review how some approximation results in commutative algebra are being used to construct equisingular deformations of singularities. The first example of such an approximation result appeared for the first time in A. P\l oski's PhD thesis.
\end{abstract}

\maketitle


\section{Introduction}
Artin’s seminal approximation theorem \cite{Ar1}, which established the density of convergent solutions in the space of formal solutions for systems of analytic equations, is an important result with numerous  applications in different fields, in particular in singularity theory \cite{Ro}. This theorem lead to new problems of approximations in local algebra and local analytic geometry \cite{Ar2, Ar70}.

In his PhD thesis, A. P\l oski made a significant contribution to this area by proposing a strengthened version of Artin's theorem, now known as the P\l oski's approximation theorem. His result not only extended Artin's theorem but also gave hints for generalizations in commutative algebra and provided applications to singularity theory. In particular, it has implications for the study of equisingular deformations of singularities, which play a key role in understanding the topological and analytical properties of singular spaces.

This paper reviews P\l oski's approximation theorem, examining its implications and generalizations, particularly in the context of equisingular deformations and algebraization of analytic sets and functions. By integrating recent advancements, we aim to shed light on the broader relevance of P\l oski's theorem. Furthermore, we explore their connections to the algebraization of function germs and meromorphic functions, with a focus on two variables cases. These results demonstrate the far-reaching impact of P\l oski's work on contemporary mathematical research.

\section{Approximation Theorems}
\subsection{Artin approximation theorem}
In 1968 M. Artin proved the following result:

\begin{theorem}[Artin approximation Theorem]\label{thm:artin-app}\cite{Ar1}
Let $x=(x_1,\ldots, x_n)$ and $y=(y_1,\ldots, y_p)$ be an $n$-tuple and a $p$-tuple of indeterminates. Let $f=(f_1,\ldots, f_m)\in\C\{x,y\}^m$ be an $m$-tuple of convergent power series. Assume given a formal power series solution vector $\wdh y(x)\in\C\lb x\rb^p$ vanishing at 0:
$$f(x,\wdh y(x))=0.$$
Then, for every $c\in\N$, there is a convergent power series solution vector $y(x)\in\C\{x\}^p$ vanishing at 0:
$$f(x,y(x))=0$$
such that 
$$\forall i,\qquad y_i(x)-\wdh y_i(x)\in (x)^c.$$
\end{theorem}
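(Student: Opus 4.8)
The plan is to split the statement into a nondegenerate (``smooth'') case, settled by a convergent Newton iteration, and the general degenerate case, which I would reduce to the nondegenerate one by an induction on the number $n$ of variables. The engine of the whole argument is the implicit function theorem in its strong analytic form; the substance of the theorem lies in getting into a situation where that engine applies.

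\textbf{The nondegenerate engine.} First I would treat the case where the $m\times p$ Jacobian matrix $\left(\partial f_i/\partial y_j\right)$ admits an $m\times m$ minor $\delta$ with $\delta(x,\wdh y(x))\neq 0$ in $\C\lb x\rb$, say of order $d$. Here the tool is Tougeron's implicit function theorem: there is an integer $e$, depending only on $d$, so that any \emph{convergent} approximate solution $\ba y(x)$ which agrees with $\wdh y$ to order $>d$ (keeping the chosen minor of order $d$ along $\ba y$) and satisfies $f(x,\ba y(x))\in(x)^{e}$ can be corrected to an exact convergent solution by a Newton-type iteration $y_{k+1}=y_k-M(x,y_k)\,f(x,y_k)$, where $M$ is built from the adjugate of the square submatrix and from $\delta$. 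The iteration converges in $\C\{x\}$ because successive corrections lie in ever higher powers of $(x)$, and, crucially, it stays \emph{convergent} rather than merely formal, since every operation involved is analytic. Truncating $\wdh y(x)$ at sufficiently high order supplies the required $\ba y$, and the same estimate forces the output to satisfy $y\equiv\wdh y\bmod (x)^{c}$ for the prescribed $c$.

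\textbf{Reducing the general case.} The hard part is to drop the nondegeneracy hypothesis, and I would argue by induction on $n$. The base case $n=1$ rests on the fact that $\C\{x_1\}$ is a Henselian (indeed excellent) discrete valuation ring whose completion is $\C\lb x_1\rb$. For the inductive step, after a generic linear change of the $x$-coordinates I would invoke the Weierstrass Preparation Theorem to make $x_n$ distinguished; dividing the equations and the relevant minors by the distinguished Weierstrass polynomials trades the system in $n$ variables for associated systems in the $n-1$ variables $x_1,\dots,x_{n-1}$ that govern the Weierstrass data, to which the induction hypothesis applies, after which one reassembles a convergent solution in all $n$ variables close to $\wdh y$.

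\textbf{Main obstacle.} The genuine difficulty is organizing this descent so that it terminates and so that the contact order $c$ survives each stage: one must track how the vanishing of the Jacobian minors evolves under Weierstrass division and guarantee that after finitely many steps one reaches the nondegenerate situation covered by the engine above. An attractive alternative that conceals this bookkeeping is the modern route through N\'eron--Popescu desingularization: the homomorphism $\C\{x\}\to\C\lb x\rb$ is regular, hence a filtered colimit of smooth $\C\{x\}$-algebras; the formal solution $\wdh y$ defines a $\C\{x\}$-algebra map to $\C\lb x\rb$ that factors through one such smooth algebra $B$, and the Henselian property of $\C\{x\}$ then produces an honest convergent point of $B$, i.e.\ a convergent solution, with the approximation order controlled by working modulo $(x)^{c}$.
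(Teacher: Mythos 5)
The paper itself offers no proof of this theorem: it is quoted directly from \cite{Ar1}, and the only proof-shaped content nearby is the observation that it follows immediately from P{\l}oski's Theorem \ref{thm:ploski-app} (replace $\wdh z(x)$ by any convergent $z(x)$ with $\wdh z(x)-z(x)\in(x)^c$ and set $y(x):=y(x,z(x))$). So your proposal must be measured against Artin's classical argument, whose architecture you only partially reproduce, and there is a genuine gap in your main route: you never explain how to reach the nondegenerate situation your ``engine'' requires, and the induction on $n$ cannot supply it. The existence of an $m\times m$ minor $\delta$ of $\partial f/\partial y$ with $\delta(x,\wdh y(x))\neq 0$ has nothing to do with the number of $x$-variables: already for the single equation $f=(y_1^2-y_2^3)^2$ every entry of the Jacobian vanishes identically along every solution, for every $n\geq 1$, so no amount of Weierstrass descent in the $x$'s produces the required minor. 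The classical fix is ideal-theoretic, not inductive: replace $(f_1,\ldots,f_m)$ by the prime ideal $\mathfrak{p}=\ker\bigl(\C\{x,y\}\to\C\lb x\rb\bigr)$ determined by $\wdh y$, choose generators $g_1,\ldots,g_M$ of $\mathfrak{p}$, and use the Jacobian criterion in characteristic zero (together with $\mathfrak{p}\cap\C\{x\}=(0)$) to produce an $h\times h$ minor, $h=\operatorname{height}\mathfrak{p}$, of $\partial g/\partial y$ with $\delta(x,\wdh y(x))\neq 0$; one must then also check, via relations of the form $\delta^N g_j\in(g_1,\ldots,g_h)$ (after multiplying $\delta$ by a further factor not in $\mathfrak{p}$) and the fact that $\C\{x\}$ is a domain, that a convergent solution of the selected $h$ equations staying close enough to $\wdh y$ annihilates all of $\mathfrak{p}$, hence all of $f$. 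None of this appears in your sketch, and without it your descent has no starting point. Your division of labour also mis-locates the induction: once Tougeron's implicit function theorem is granted as a black box, no induction on $n$ is needed at all --- a deep truncation of $\wdh y$ feeds directly into your first paragraph; in Artin's original paper the Weierstrass-division induction is a substitute for Tougeron's lemma, not for the nondegeneracy reduction. (A smaller point: your base case $n=1$ invokes Greenberg-type approximation for excellent Henselian DVRs, itself nontrivial; Artin's induction starts at the trivial case $n=0$.)

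Your fallback via N\'eron--Popescu desingularization is, by contrast, correct and complete modulo that (very heavy) theorem: $\C\{x\}$ is excellent and Henselian, so $\C\{x\}\to\C\lb x\rb$ is regular, the map $\C\{x\}[y]/(f)\to\C\lb x\rb$ factors through a smooth $\C\{x\}$-algebra $B$, and the standard lemma that a smooth algebra over a Henselian local ring with an $\wdh A$-point admits $A$-points congruent to it modulo $\mathfrak{m}^c$ (\'etale-local structure of smooth morphisms plus density of $\C\{x\}$ in $\C\lb x\rb$) yields the approximation. This is not circular, since Popescu's theorem is proved independently of Artin's, but it is far more machinery than the statement needs, and historically inverted: as the paper recounts, Popescu's theorem grew out of P{\l}oski's strengthening of precisely this result. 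Within this survey the shortest complete argument is the deduction from Theorem \ref{thm:ploski-app} recorded just after its statement --- whose own proof runs exactly through the prime-ideal and Jacobian-criterion reduction your first route is missing.
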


This result means that the set of convergent solutions of $f(x,y)=0$ is dense in the set of formal solutions for the $(x)$-adic topology.
One year later, M. Artin proved several extensions of this theorem. In particular he proved that this result remains true if we replace the rings of convergent power series $\C\{x\}$ and $\C\{x,y\}$ by the rings of algebraic power series $\kk\lg x\rg$ and $\kk\lg x,y\rg$ for a characteristic zero field $\kk$. Let us recall that the elements of $\kk\langle x\rangle$ are the formal power series $f(x)$ such that $P(x,f(x))=0$ for some nonzero polynomial $P(x,t)\in\kk[x,t]$. Moreover this set of algebraic power series is a Noetherian local ring satisfying the Weierstrass division and preparation theorems.

\subsection{P\l oski approximation theorem}

In his PhD thesis \cite{ploski1973}, A. P\l oski made a deep study of  the method of M. Artin to prove a strengthened version of his theorem:

\begin{theorem}[P\l oski approximation theorem]\label{thm:ploski-app}
Let $x=(x_1,\ldots, x_n)$ and $y=(y_1,\ldots, y_p)$ be an $n$-tuple and a $p$-tuple of indeterminates. Let $f=(f_1,\ldots, f_m)\in\C\{x,y\}^m$ be an  $m$-tuple of convergent power series. Assume given a formal power series solution vector $\wdh y(x)\in\C\lb x\rb^p$ vanishing at 0:
$$f(x,\wdh y(x))=0.$$
Then there is a convergent power series solution $y(x,z)\in\C\{x,z\}^p$, where $z=(z_1,\ldots, z_s)$ is a new $s$-tuple of indeterminates:
$$f(x,y(x,z))=0$$
and a vector of formal power series $\wdh z(x)\in\C\lb x\rb^s$ vanishing at $0$ such that
$$y(x,\wdh z(x))=\wdh y(x).$$
\end{theorem}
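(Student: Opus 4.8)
The plan is to bootstrap Płoski's theorem from Artin's theorem (Theorem~\ref{thm:artin-app}) by enlarging the system of equations so that the convergent solution carries a full parametrized family of solutions, out of which $\wdh y(x)$ is recovered by a formal specialization of the new parameters $z$. The guiding idea is that Artin's theorem gives a \emph{single} convergent solution close to $\wdh y(x)$, whereas Płoski's theorem asks for a convergent solution \emph{map} $y(x,z)$ whose formal fiber over $\wdh z(x)$ is exactly $\wdh y(x)$; the extra variables $z$ measure the difference between the convergent approximant and the formal solution.

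First I would consider the ideal $I\subset\C\lb x\rb^m$ generated by relations among the partial derivatives $\partial f/\partial y_j(x,\wdh y(x))$, or more structurally, study the module of relations of the Jacobian matrix along the formal solution. The key is a Weierstrass-type preparation: after a generic linear change of the $y$-coordinates one can arrange that the $y$-variables split into those controlled by the equations and a complementary set that will become (part of) the parameters. Concretely, I would set $z$ to record the coefficients, up to suitable order, needed to deform $\wdh y(x)$ within the solution set, and then write down an \emph{enlarged} system $F(x,z,Y)=0$ in the unknowns $Y=Y(x,z)$ whose formal solution at a specific $\wdh z(x)$ reproduces the original data. Applying Artin's theorem to $F$ in the variables $(x,z)$ yields a convergent $Y(x,z)=y(x,z)$, and the coincidence $y(x,\wdh z(x))=\wdh y(x)$ is engineered to be one of the imposed equations rather than something proved after the fact.

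The main obstacle, and the technical heart of the argument, is setting up the enlarged system $F$ so that (i) it is genuinely convergent, i.e. $F\in\C\{x,z,Y\}$, (ii) its formal solvability is guaranteed by the existence of $\wdh y(x)$, and (iii) the specialization equation $y(x,\wdh z(x))=\wdh y(x)$ is forced. The natural mechanism is to linearize $f$ along $\wdh y$: writing $f(x,\wdh y(x)+u)=A(x)u+O(u^2)$ with $A(x)=\partial f/\partial y(x,\wdh y(x))$, the relations among solutions are governed by the cokernel of $A(x)$, which by the flatness/coherence of $\C\{x\}\hookrightarrow\C\lb x\rb$ can be presented by \emph{convergent} matrices. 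The parameters $z$ then index a convergent lift of a basis of formal syzygies, and this is exactly where P\l oski's refinement of Artin's method (nested/parametrized approximation) does the work that a naive application of Theorem~\ref{thm:artin-app} cannot.

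Finally I would verify the conclusion by substitution: having obtained $y(x,z)\in\C\{x,z\}^p$ solving $f(x,y(x,z))=0$ identically, and $\wdh z(x)\in\C\lb x\rb^s$ vanishing at the origin from the formal specialization built into $F$, one substitutes to get $f(x,y(x,\wdh z(x)))=0$ and $y(x,\wdh z(x))=\wdh y(x)$ by construction. I expect the delicate points to be the \emph{convergence} of the presentation matrices and the bookkeeping ensuring that $\wdh z(x)$ vanishes at $0$ (so that the formal fiber sits at the correct point), rather than any deep new idea beyond the structure already present in Artin's proof.
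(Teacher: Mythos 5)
There is a genuine gap, and it sits exactly at the step you yourself flag as the ``technical heart.'' Your mechanism is to build a convergent enlarged system $F\in\C\{x,z,Y\}$ in which the specialization identity $y(x,\wdh z(x))=\wdh y(x)$ is ``one of the imposed equations,'' and then to apply Theorem~\ref{thm:artin-app} in the variables $(x,z)$. This cannot work as described. First, $\wdh y(x)$ is merely formal, so it cannot appear in any convergent system $F$; and Artin's theorem only ever produces a convergent solution agreeing with a formal one \emph{up to a prescribed finite order} $c$, never exactly, whereas the whole content of Theorem~\ref{thm:ploski-app} is the \emph{exact} equality $y(x,\wdh z(x))=\wdh y(x)$. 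Second, the condition you want to impose is \emph{nested}: $\wdh z$ must depend on $x$ alone while $y$ depends on $(x,z)$. Nested approximation in the convergent category is false in general, by Gabrielov's example \cite{Gab}, as remarked in the paper right after Theorem~\ref{thm:nested_ploski}; so there is no Artin-type black box that forces the constraint you need. Third, your linearization step is unjustified: $A(x)=\partial f/\partial y\,(x,\wdh y(x))$ has \emph{formal} entries, so the flatness of $\C\{x\}\hookrightarrow\C\lb x\rb$ (which lets one generate formal syzygies of a \emph{convergent} matrix by convergent ones) does not give a convergent presentation of its cokernel; and even granting one, $\ker A$ controls the solution set of $f=0$ near $\wdh y$ only to first order, and taming the higher-order terms is precisely where the real work lies. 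Finally, the proposal is circular: at the decisive moment you invoke ``P{\l}oski's refinement of Artin's method (nested/parametrized approximation)'' to do ``the work that a naive application of Theorem~\ref{thm:artin-app} cannot'' --- that is, you assume the theorem being proved.

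The actual route, as the paper indicates, is not a reduction to Artin's theorem (which Theorem~\ref{thm:ploski-app} strictly strengthens, and trivially implies), but a reworking of Artin's induction on the number of $x$-variables in which the inductive statement itself is strengthened to produce a parametrized family of solutions at every step; the essential tools are the Jacobian criterion and the Weierstrass division theorem \cite{ploski1973, ploski17, ploski99}. Alternatively, the theorem follows from Popescu's smoothing theorem \cite{Pop}: the morphism $\C\{x,y\}/(f)\to\C\lb x\rb$ determined by $\wdh y$ factors through a smooth $\C\{x\}$-algebra, and smoothness then yields the convergent parametrization $y(x,z)$ together with the formal point $\wdh z(x)$. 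Your intuition that $z$ should ``measure the difference between the convergent approximant and the formal solution'' is in the right spirit, but making it precise requires one of these two substantial inputs, not an enlargement trick on top of Theorem~\ref{thm:artin-app}.
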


This result trivially implies Artin approximation theorem. Indeed, with the notations of Theorem \ref{thm:artin-app} and Theorem \ref{thm:ploski-app}, it is enough to replace $\wdh z(x)$ by any vector of convergent power series $z(x)$ such that $\wdh z(x)-z(x)\in (x)^c$, and set $y(x):=y(x,z(x))$ to obtain the conclusion of Theorem \ref{thm:artin-app}.

A. P\l oski published this theorem in \cite{ploski1974} without the details of the proof. He did not published any complete proof (except of his PhD thesis \cite{ploski1973} written in Polish) before 2017. At a conference at Lille in 1999, he gave a course presenting this result, and wrote notes with a complete proof \cite{ploski99} (in French). These notes are available on the web, but have not been formally published. Therefore in 2017, A. P\l oski published a complete proof in \cite{ploski17}.

\begin{remark}
P{\l}oski's result can be roughly rephrased as follows: any formal power series solution of $f(x,y)=0$ is a formal point in an analytically parametrized family of solutions. This result can be thought as a kind of uniformization theorem.
\end{remark}

\begin{remark}
P{\l}oski's proof works in exactly the same way alter replacing rings of convergent series with rings of algebraic series over a  characteristic zero field. Indeed, the essential arguments of the proof are the Jacobian criterion and the Weierstrass division theorem. In fact, this theorem holds in the more general framework of Weierstrass systems in any characteristic \cite{RoW}.
\end{remark} 

\begin{example}
 Assume that $f(x,y)$ satisfies the assumptions of the Implicit function Theorem : $f(0,0)=0$ and $\frac{\partial f}{\partial y}(0,0)$ is an invertible matrix (here we assume $m=p$). Then the equation $f(x,y)=0$ has only one solution $y^0(x)\in\C\{x\}^p$ which is convergent. Therefore Theorem \ref{thm:ploski-app} is obviously true with $s=0$ and $y(x)=y^0(x)$.
\end{example}

\begin{example}
 Consider one equation $f(x,y):=y_1^2-y_2^3=0$. Then, since $\C\{x\}$ are $\C\lb x\rb$ are unique factorization domains, the set of convergent (resp. formal) solutions is
$$\{(z(x)^3,z(x)^2)\mid z(x)\in\C\{x\}\ (resp. \C\lb x\rb)\}.$$
Therefore Theorem \ref{thm:ploski-app} is true with $s=1$. Moreover, in this case the above analytically parametrized family of solutions is the whole set of solutions of $f(x,y)=0$.
\end{example}

\begin{example}
Consider the same $f(x,y)$ as before and assume that $x$ is a single indeterminate. Let $(\wdh y_1(x), \wdh y_2(x))$ be a nonzero formal power series solution. Let $d:=\ord(\wdh y_1(x))>0$. Since $\wdh y_1(x)^2=\wdh y_2(x)^3$, we see that $d\in 3\Z$. Let $d=3(e+1)$ where $e\in\N$. Then we have $\wdh y_1(x)=x^{3e}\wdh z(x)^3$ and $\wdh y_2(x)=x^{2e}\wdh z(x)^2$ for some formal power series $\wdh z(x)$ vanishing at 0. Then in Theorem \ref{thm:ploski-app} we can choose $(y_1(x,z),y_2(x,z)):=(x^{3e}z^3, x^{2e}z^2)$. The analytically parametrized family of solutions we obtain in this way does not cover the whole set of solutions of $f(x,y)=0$.\\
P{\l}oski's proof is completely effective, and if we follows his proof for this particular example, we obtain exactly this family of solutions.  Therefore,  P{\l}oski's proof does not provide the whole set of solutions of $f(x,y)=0$ in general.
\end{example}

\begin{remark}
Giving a formal power series solution $\wdh y(x)$ of $f(x,y)=0$  is equivalent to the data of a morphism of $\C\{x\}$-algebra:
$$\phi:\frac{\C\{x\}[y]}{(f(x,y))}\lgw \C\lb x\rb$$
Theorem \ref{thm:ploski-app} asserts that this morphism factors as
 $$\xymatrix{ & \C\llbracket x\rrbracket\\
 \dfrac{\C\{x,y\}}{(f(x,y))} \ar[ru]^{\phi}  \ar[r]& \C\{x,z\} \ar[u]}$$  
 \end{remark}
 
Having taken note of P\l oski's Theorem, D. Popescu conjectured that this result was more general: given a Henselian local ring $A$, $\wdh A$ denoting its completion, for every finitely generated $A$-algebra $B$ and any $A$-morphism $\phi:B\lgw \wdh A$, there exists a \emph{smooth} $A$-algebra $C$ such that $\phi$ factors through $C$:
 $$\xymatrix{ & \wdh A\\
B \ar[ru]^{\phi}  \ar[r]& C\ar[u]}$$  
Then D. Popescu proved his conjecture in \cite{Pop} (see also \cite{Rot}, \cite{Sp}, \cite{Qu}, \cite{Sw} for subsequent proofs).  Moreover, in \cite{Sp}, M. Spivakovsky proved a nested version of Popescu's Theorem. We will not state here this result, whose formulation is a bit technical, but from Spivakovsky's theorem we can deduce the following nested version of P{\l}oski's  result:

\begin{theorem}[Nested Artin-P{\l}oski-Popescu approximation theorem, \cite{BPR}]\label{thm:nested_ploski}
Let $f(x,y)\in \kk \lg x\rg[y]^m$ and consider a solution $y(x)\in\kk\lb x\rb^p$ of $$f(x,y(x))=0.$$
Assume that $y_i(x)$ depends only on $(x_1,\ldots, x_{\si(i)})$ where $i\lgm \si(i)$ is an increasing function. Then there exist a new set of indeterminates $z=(z_1,\ldots, z_s)$, an increasing function $\t$, convergent power series $z_i(x)\in\kk \lb x\rb$ vanishing at 0 such that $z_1(x)$, \ldots, $z_{\t(i)}(x)$ depend only on $(x_1,\ldots, x_{\si(i)})$, and  an  algebraic power series vector solution $y(x,z)\in\kk \lg x,z\rg^p$ of
 $$f(x,y(x,z))=0$$ such that for every $i$, 
 $$y_i(x,z)\in\kk \lg x_1,\ldots,x_{\si(i)},z_1,\ldots,z_{\t(i)}\rg \text{ and }y(x)=y(x,z(x)).$$
\end{theorem}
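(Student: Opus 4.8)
The plan is to reduce the statement to Spivakovsky's nested version of Popescu's theorem, following the same scheme that deduces the classical P{\l}oski theorem from Popescu's theorem (as indicated in the Remark above), while carefully propagating the nested structure at every step. First I would reformulate the hypothesis algebraically. Set $A:=\kk\lg x\rg$, a Henselian local ring with completion $\wdh A=\kk\lb x\rb$, and let $A_j:=\kk\lg x_1,\ldots,x_j\rg$ denote the nested subrings with completions $\wdh A_j=\kk\lb x_1,\ldots,x_j\rb$. The polynomial system defines the finitely presented $A$-algebra $B:=\kk\lg x\rg[y]/(f)$, and the given solution defines an $A$-morphism $\phi:B\lgw\wdh A$, $y\lgm y(x)$. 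The nesting hypothesis $y_i(x)\in\wdh A_{\si(i)}$ records precisely that $\phi$ is compatible with the two filtrations by the $x$-levels.

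Next I would apply Spivakovsky's nested smoothing theorem to $\phi$. This produces a factorization $B\lgw C\lgw\wdh A$ of $\phi$ through an $A$-algebra $C$ that is smooth and respects the nested filtration. Concretely, after localizing at the point cut out by the section $C\lgw\wdh A$, one may present $C$ as a standard smooth algebra $A[z,w]/(g_1,\ldots,g_r)$ in which the free variables $z=(z_1,\ldots,z_s)$ each carry a nesting level, the Jacobian $\det(\partial g_k/\partial w_l)$ is a unit in $C$, and the section sends $z_j\lgm\wdh z_j(x)$, $w_l\lgm\wdh w_l(x)$ with each $\wdh z_j(x)$ depending only on the $x$-variables permitted by its level. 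Reindexing these levels yields the increasing function $\t$ together with the property that $\wdh z_1(x),\ldots,\wdh z_{\t(i)}(x)$ depend only on $(x_1,\ldots,x_{\si(i)})$.

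Finally I would solve for the $w$-variables. Since the Jacobian $\det(\partial g_k/\partial w_l)$ is a unit at the relevant point and $\kk\lg x,z\rg$ is Henselian, the implicit function theorem (Hensel's lemma for systems) produces $w_l(x,z)\in\kk\lg x,z\rg$ with $g(x,z,w(x,z))=0$ and, by uniqueness of Hensel roots, $w_l(x,\wdh z(x))=\wdh w_l(x)$; because the defining equations for each block of $w$-variables only involve the variables prescribed by its level, the solution $w_l(x,z)$ inherits the same restricted dependence. Composing $B\lgw C\lgw\kk\lg x,z\rg$ with $w\lgm w(x,z)$ then defines $y(x,z)\in\kk\lg x,z\rg^p$ satisfying $f(x,y(x,z))=0$ and $y_i(x,z)\in\kk\lg x_1,\ldots,x_{\si(i)},z_1,\ldots,z_{\t(i)}\rg$, while compatibility of the section yields $y(x)=y(x,\wdh z(x))$.

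The main obstacle is the second step: everything hinges on the nested smoothing theorem delivering a factorization whose smooth structure is genuinely compatible with the filtration of $x$ by levels, so that the standard smooth presentation and the ensuing Hensel step can be carried out level by level. Verifying that the new parameters $z_j$ and the solved variables $w_l$ respect exactly the dependency constraints encoded by $\si$---and hence produce the function $\t$ together with the nested series $\wdh z_i(x)$---is precisely the delicate bookkeeping that the nested version of Popescu's theorem is designed to control. Once it is invoked, the passage from the abstract nested-smooth algebra to the explicit parametrized family is routine, mirroring the non-nested deduction of Theorem \ref{thm:ploski-app}.
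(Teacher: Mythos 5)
Your proposal is correct and follows essentially the same route as the paper, which (via the proof in \cite{BPR}) deduces the theorem from Spivakovsky's nested version of Popescu's smoothing theorem \cite{Sp}: factor the morphism $\kk\lg x\rg[y]/(f)\to\kk\lb x\rb$ through a filtered smooth algebra, choose a standard smooth (étale-type) presentation compatible with the filtration whose free variables give the $z_j$ and the function $\t$, and solve for the remaining variables by the implicit function theorem in the Henselian rings $\kk\lg x_1,\ldots,x_{\si(i)},z_1,\ldots,z_{\t(i)}\rg$, with uniqueness of the Hensel root giving $y(x)=y(x,z(x))$. The delicate point you flag—that the nested smooth factorization and its presentation genuinely respect the levels—is exactly the content of Spivakovsky's theorem as exploited in \cite{BPR}, so your sketch matches the intended argument.
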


\begin{remark}
Let us mention that this nested version of P{\l}oski's theorem is no longer true for rings of convergent power series in view of an example due to A. Gabrielov \cite{Gab}.
\end{remark}

\section{Algebrization of the germ of an analytic set or of an analytic function}
Let $\K=\R$ or $\C$, $x=(x_1, \ldots , x_n)$. If $f\in\K\{x\}$ is an isolated singularity then, by a result of Samuel \cite{Sa}, $f$ is \emph{finitely determined}, that is, there exists an integer $k$ such that for every $g\in\K\{x\}$ with $f-g\in(x)^k$, there exists an analytic diffeomorphism $h:(\K^n,0)\lgw (\K^n,0)$ such that $$f\circ h(x)=g(x).$$ 
In particular, if we choose $g$ to be the truncation of $f$ at an order $\geq k$, we find that $f$ can be transformed into a polynomial by a local analytic change of coordinates (one says that $f$ is analytically equivalent to a polynomial function germ). Several authors generalized Samuel's result, and eventually Kucharz \cite{Ku} proved that every (non necessarily reduced) analytic function $f$ in two variables is \emph{weakly finitely determined}. This means that if  $f=f_1^{\ell_1}\cdots f_p^{\ell_p}$ is the decomposition of $f$ into a product of irreducible factors, then there is an integer $k$ such that, for every $g_i\in\K\{x\}$ with $f_i-g_i\in (x)^k$, there exists an analytic diffeomorphism $h:(\K^n,0)\lgw (\K^n,0)$ such that $$f\circ h(x)=g_1^{\ell_1}\cdots g_p^{\ell_p}.$$
Weakly finitely determined germs are also equivalent to a polynomial function germ. Kucharz also proved that any analytic function in $n$ variables is equivalent to a polynomial in two variables whose coefficients are analytic functions in $n-2$ variables.

Therefore, in the case of $n=2$, any analytic function germ  can be transformed by an analytic change of coordinates to a polynomial one. 
When $f$ is a (reduced) convergent power series in three or more variables, this is no longer true: H. Whitney \cite{W} gave an example of a three-variable reduced convergent power series which is not equivalent to a polynomial function or to an algebraic one.

We explain below how to construct a deformation  of a given analytic function germ $f: (\K^n,0)\to (\K,0)$ that is topologically trivial (with respect to the right equivalence, i.e. by a homeomorphism in the source $(\K^n,0)$), and such that one fiber of this deformation is an algebraic function germ. This construction is based on Theorem \ref{thm:nested_ploski} and the notion of Zariski equisingularity.   

\subsection{Zariski equisingularity}
Zariski equisingularity of families of singular varieties was introduced by Zariski in \cite{Za} in the context of equisingularity of a hypersurface along a smooth subvariety.    It can be formulated over any field of characteristic zero in the algebroid set-up (varieties defined by the formal power series) and over $\K=\R$ or $\C$ in the analytic case.  For a survey on Zariski equisingularity see \cite{Par}, which also contains an appendix on higher order (also called generalized) discriminants that are used in the construction below. 



\begin{definition}
Let $V= F^{-1}(0)$, $F\in \K\{t,x\}$, be an analytic hypersurface in a neighborhood 
of the origin in $\K^\ell \times \K^n$.  
We say that $V$ is \emph{Zariski equisingular with respect to the 
parameter $t\in \K^\ell$ (and a local system of coordinates $x_1, \ldots, x_n$ in $\K ^n$)}
if (with $\pi:(\K^{\ell}\times\K^n,0)\lgw (\K^{\ell}\times\K^{n-1},0)$ denoting the canonical projection):
\begin{enumerate}
\item $ (\K^\ell \times \{0\},0)  \subset (V,0)$;
    \item The restriction $\pi_{|_{(V,0)}}$ is (algebraically) finite;
    \item The branch locus of $\pi_{|_{(V,0)}}$ is itself Zariski equisingular with respect to the 
parameter $t$;
    \item When $\ell=\dim(V,0)$, then $(V,0)$ is Zariski equisingular with respect to the 
parameter $t$ if $(V,0)=(\K^\ell,0)$ (or at some stage the branch locus is empty).
\end{enumerate}
We say that $V$ is \emph{Zariski equisingular with respect to the 
parameter $t\in \K^\ell$} if it is so after a local change of coordinates 
$(\K^\ell \times  \K^n,0) \to (\K^\ell \times  \K^n,0)$ preserving the parameter $t$.  
\end{definition}

\begin{remark}
Note that this notion depends heavily on the local choice of coordinates.  Therefore often we consider Zariski equisingularity with respect to generic or generic linear $x$ coordinates \cite{zariski1979}. 
Whether a generic linear choice is generic in the sense of \cite{zariski1979} is an open problem for singularities of codimension $\ge 3$.  For singularities of codimension $1$, i.e. equisingular families of plane curves the positive answer follows from Zariski's theory of equisingularity of families of plane curves.  For singularities in codimension $2$, i.e. equisingular families of surfaces singularities in $\C^3$, the positive answer was given in  \cite{PPJAG2024}.   
\end{remark}

In practice one may argue as follows.  Let $F$ be an analytic function defining $(V,0)$ and suppose that $F(0,x)\not \equiv 0$. Then after a linear change of coordinates $x$, we may assume that $F$ is a pseudopolynomial $F_n$ times a unit $u_n(t,x)$. This means that  $F_n$ is a polynomial in $x_n$ with coefficients that are analytic in  $(t,x^{n-1})$ (Here $x^{n-1}=(x_1,\ldots, x_{n-1})$):
$$F_n(t,x)=x_n^{p_n}+\sum_{j=1}^{p_n}a_{n-1,j}(t,x^{n-1})x_n^{p_n-j}$$
where $a_{n-1,j}(0)=0$ for every $j$.
 Then we denote by $\Delta_n(t,x^{n-1})$ the discriminant of $F_n$ seen as a polynomial in $x_n$. If $F$ is reduced, $\Delta_n$ is not identically zero. We assume again that $\Delta_n(0,x)\not \equiv 0$.  In general, if $F$ is not reduced we replace it by $(F)_{red}$, or equivalently we consider the higher order discriminants of $F_n$. 
Thus by induction we define a sequence of pseudopolynomials
 \begin{align*}
F_{i} (t, x^i )= x_i^{p_i}+ \sum_{j=1}^{p_i} a_{i-1,j} (t,x^{i-1})
 x_i^{p_i-j}, \qquad    i=0, \ldots ,n,
\end{align*}
$t\in \K ^ \ell$,  $x^i :=(x_1,\ldots, x_i)\in\K^i$, 
with analytic coefficients $a_{i-1,j}$, that satisfy 
\begin{enumerate}
\item
$F_{i-1} (t, x^{i-1})=0$  if and only if $F_{i} (t, x^{i-1}, x_i)=0$ considered as an equation in $x_i$ with 
$(t, x^{i-1})$ fixed, has fewer complex (!) roots than for generic $(t, x^{i-1})$; 
\item
$F_0\equiv 1$; 
\item
there are positive reals $\delta_k>0$, $k=1, \ldots, \ell$, and $\varepsilon _j> 0$, $j=1, \ldots, n$,  such that $F_i$ are defined on the polydiscs $U_i:= \{ |t_k|< \delta_k, |x_j|< \varepsilon _j , k=1, \ldots,\ell,   j=1, \ldots, i \}$;
\item
all roots of $F_{i} (t, x^{i-1}, x_i)=0$, for $(t,x^{i-1}) \in U_{i-1}$,  lie inside the circle of radius $\varepsilon _i$;  
\item
either $F_i(t,0)\equiv 0$ or $F_i \equiv 1$ (and in the latter case 
we define $F_k \equiv 1$ for all $k\le i$).
\end{enumerate}

In practice, to define $F_{i-1}$ in term of $F_i$, we do the following: if $F_i$ is not reduced then we denote by $\Delta_i$ the first nonzero generalized  discriminant of $F_i$  as a polynomial in $x_i$. Then, after a linear change of coordinates in $(t,x^{i-1})$, we may assume that 
\begin{align}\label{eqn:equation}
    \Delta_i(t,x^{i-1})=u_{i-1}(t,x^{i-1})F_{i-1}(t,x^{i-2},x_{i-1}),
\end{align}
$u_{i-1}(0)\ne 0$, where $F_{i-1}$ is a pseudopolynomial in $x_{i-1}$. After this linear change of coordinates, $F_j$, for $j\geq i$, is transformed into a new pseudopolynomial in $x_j$ of degree $p_j$ satisfying again (1), (3) and (4), so it does not affect the form of the previous pseudopolynomials.

An important result due to Varchenko is the following one:

\begin{theorem}[{\rm \cite{varchenko1972, varchenko1973, varchenkoICM, PP}}]\label{thm:Varchenko}
A Zariski equisingular family of singularities is topologically trivial.  
\end{theorem}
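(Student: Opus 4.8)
The plan is to establish topological triviality by induction on the number $n$ of the fiber variables, constructing a trivializing homeomorphism stratum by stratum from the deepest discriminant upward. The inductive data is exactly the sequence of pseudopolynomials $F_i(t,x^i)$ together with the discriminant relations \eqref{eqn:equation}: condition (1) guarantees that over the complement of the zero set of $\Delta_i$ the roots of $F_i$ in $x_i$ stay distinct and vary analytically in $(t,x^{i-1})$, while condition (4) confines all these roots inside the polydisc of radius $\varepsilon_i$. The aim is to produce, for each parameter value $t$ near $0$, a homeomorphism $h_t$ of the fiber polydisc to the central fiber $t=0$ that preserves $V$, depends continuously on $t$, and restricts to the identity on $\K^\ell\times\{0\}$.

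First I would set up the induction: assuming a continuous family of trivializing homeomorphisms $h^{(n-1)}_t$ has been constructed in the $(n-1)$ variables $x^{n-1}$ that carries the branch locus $\Delta_n=0$ (i.e. the family $F_{n-1}=0$) of the central fiber onto that of the fiber over $t$, I would lift it to a homeomorphism in the last coordinate $x_n$. Over the region where $\Delta_n\ne 0$ the roots $\xi_1(t,x^{n-1}),\ldots,\xi_{p_n}(t,x^{n-1})$ of $F_n$ are distinct continuous (indeed analytic) functions by condition (1), so they form a covering of the punctured base; the lift is obtained by a Whitney-type interpolation in the $x_n$-disc that sends the ordered root configuration over $h^{(n-1)}_t(x^{n-1})$ at parameter $0$ to the configuration over $x^{n-1}$ at parameter $t$, extended by a radial/piecewise-linear scheme between consecutive roots and out to the bounding circle $|x_n|=\varepsilon_n$. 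Condition (4) ensures the whole construction stays inside $U_n$, and the real structure (when $\K=\R$) is respected because complex conjugate roots are permuted compatibly.

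The central technical point, and the step I expect to be the main obstacle, is the continuity of the lift across the discriminant locus $\Delta_n=0$, where roots of $F_n$ collide and the labeling of sheets degenerates. Here the recursive hypothesis is essential: the family $h^{(n-1)}_t$ has been built to match the branch loci, so the collision patterns of the roots over $x^{n-1}$ and over $h^{(n-1)}_t(x^{n-1})$ are combinatorially identical, and one must show that the interpolation scheme extends continuously over these coincidence sets. This is precisely where Varchenko's analysis of the local monodromy and the uniform boundedness supplied by conditions (3) and (4) enter, guaranteeing that the fiberwise homeomorphisms glue into a single homeomorphism of the total space over a neighborhood of $0$ in the parameter $t$.

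Once fiberwise continuity across the strata is secured, I would assemble the maps $(t,x^{n-1},x_n)\mapsto (t,h^{(n-1)}_t(x^{n-1}),\widetilde h_t(x^{n-1},x_n))$ into the desired trivialization $H$ of the family, verify that $H$ is a homeomorphism by exhibiting its inverse through the symmetric construction (swapping the roles of the fiber over $t$ and the central fiber), and check that $H$ maps $V$ to the product of the central fiber with the parameter space. The base case $F_0\equiv 1$ of the induction is trivial by condition (2), and condition (5) handles the degenerate situation $F_i\equiv 1$ where the construction terminates. This yields the topological triviality asserted in Theorem~\ref{thm:Varchenko}.
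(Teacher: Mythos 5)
First, note that the paper does not prove Theorem~\ref{thm:Varchenko} at all: it is a survey and states the result as a citation to Varchenko's papers and to \cite{PP}, so the only fair comparison is with the proof in those references, whose overall architecture (induction on the tower of pseudopolynomials $F_i$, trivializing the base so as to preserve the branch locus, then lifting through the roots in the last variable) you have reproduced correctly. The problem is that the entire mathematical content of the theorem sits in the one step you explicitly black-box: you write that continuity of the lift across $\Delta_n=0$ is ``precisely where Varchenko's analysis of the local monodromy\ldots{} enter[s].'' In a blind proof that is circular --- you are citing the theorem's own proof for its only hard step. Two concrete points make the gap more than cosmetic. (i) Your interpolation scheme is phrased in terms of ``the ordered root configuration,'' but no global ordering of the roots $\xi_1,\ldots,\xi_{p_n}$ exists over the complement of the discriminant: the roots form a multivalued (unordered) continuous family with nontrivial monodromy, so a ``Whitney-type interpolation between consecutive roots'' is not even well-defined as stated, and making it well-defined (e.g., by working with unordered divisors in the $x_n$-disc and constructing the fiber homeomorphism continuously in the base point) is where the real work lies. (ii) You assert that because $h^{(n-1)}_t$ carries the branch locus of the central fiber to that of the fiber over $t$, ``the collision patterns of the roots\ldots{} are combinatorially identical.'' That does not follow: preserving the zero set of $\Delta_n$ only tells you \emph{where} the number of distinct roots drops, not \emph{how} --- the number of distinct roots and their multiplicities at corresponding points of the discriminant locus are not controlled by matching the set $\Delta_n=0$ alone. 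Establishing the needed constancy along the trivialization requires exploiting the full inductive tower \eqref{eqn:equation} (the discriminant locus is itself equisingular, and so on down), and this is exactly the delicate part of Varchenko's argument.

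So: right skeleton, but the proposal proves nothing beyond what is routine (the covering-space behavior off the discriminant, the confinement of roots by conditions (3) and (4), and the trivial base case). A complete proof must (a) replace ordered root-tracking by a construction on unordered root sets that is continuous in the base point, and (b) show that the inductively constructed base trivialization preserves enough of the stratified structure that the fiberwise maps it induces are bijective on roots over every base point, including discriminant points. For (a) and (b) see Varchenko's original construction, or the approach of \cite{PP}, which builds the trivialization with much stronger (arc-wise analytic) regularity and in particular resolves the continuity-across-the-discriminant issue by a quantitative study of the roots rather than by stratumwise gluing.
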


\subsection{Construction of a deformation of the germ of an analytic set}

Suppose now that $(V,0)$ is the germ at the origin of an analytic subset of $\K^n$ given by one equation $f(x)=0$.  We do not assume $f$ reduced but we assume $f\not \equiv 0$ . 
We define a local system of coordinates $x=(x_1, \ldots, x_n)$ and a sequence of distinguished pseudopolynomials 
$$f_{i} (  x^i ) = x_i^{p_i}+ \sum_{j=1}^{p_i} a_{i-1,j} (x^{i-1}) x_i^{p_i-j} , \quad i=1, \ldots, n,$$ 
as follows.  

Let, after a local change of coordinates $x$, $f_n$ be the Weierstrass polynomial associated to $f$.  Then we consider the generalized discriminants $\D_{n,i} $ of $f_n$ that are polynomials in the entries of $a_{n-1}:= (a_{n-1,1} , \ldots , a_{n-1,p_{n}} )$.   
  Let $l_n$ be a positive integer such that   
  \begin{align}\label{discriminants:n}
\D_{n,l} ( a_{n-1} )\equiv 0 \qquad l<l_n   ,
\end{align}
and  $\D_{n,l_n}  ( a_{n-1} ) \not \equiv 0$.   Then, after a local change of coordinates $x^{n-1}$, by the Weierstrass Preparation Theorem, we may write 
    \begin{align*}
 \D_{n,l_n} ( a_{n-1} ) =  u_{n-1} (x^{n-1}) \Big (x_{n-1}^{p_{n-1}}
 + \sum_{j=1}^{p_{n-1}} a_{n-2,j} (x^{n-2}) x_{n-1}^{p_{n-1}-j} \Big ) , 
 \end{align*}
 where $u_{n-1}(0)\ne 0$ and for all $j$, $a_{n-2,j}(0)=0$. 
 Note that if $f$ is reduced then $l_n=1$ and the only generalized discriminant we consider is the standard one. 
  Then we set  
$$
f_{n-1} : = x_{n-1}^{p_{n-1}}+
  \sum_{j=1}^{p_{n-1}} a_{n-2,j} ( x^{n-2}) x_{n-1}^{p_{n-1}-j}, $$
 
We continue this construction and  define a sequence of pseudopolynomials $f_{i} (  x^i )$, $i=1, \ldots, n-1$, such that  $f_i= x_i^{p_i}+ \sum_{j=1}^{p_i} a_{i-1,j} (x^{i-1}) x_i^{p_i-j}  $ is the Weierstrass polynomial associated to the first non-identically-zero generalized discriminant $\D_{i+1,l_{i+1}} ( a_{i} )$ of $f_{i+1}$, where we denote in general $a_{i}= (a_{i,1} , \ldots , a_{i,p_{i+1}} )$, 
  \begin{align}\label{eq: polynomials:f_i}
 \D_{i+1,l_{i+1}} ( a_{i} ) =  u_{i} (x^{i})  \Big (x_i^{p_i}+ \sum_{j=1}^{p_i} a_{i-1,j} (x^{i-1}) x_i^{p_i-j}  \Big) ,  
 \quad i=0,\ldots,n-1 ,
\end{align}
and $a_{i-1,j}(0)=0$. 
 Thus, for $i=0,\ldots,n-1,$ the vector  of functions $a_i$ satisfies  
  \begin{align}\label{eq:discriminants_i}
\D_{i+1,l} ( a_{i} )\equiv 0 \text { for } l<l_{i+1}   ,  \quad \D_{i+1,l_{i+1}} ( a_{i} ) \ne 0.
\end{align}
This means in particular that 
  \begin{align*}
\D_{1,l} ( a_{0} ) \equiv 0 \quad \text {for } l<l_1  \text { and }  \D_{1,l_1} ( a_{0} ) \equiv u_0 ,
\end{align*}
where $u_0$ is a nonzero constant. 

Next we apply the algebraic power series version of Theorem \ref{thm:nested_ploski} to the system of equations given by \eqref{eq: polynomials:f_i} and \eqref{eq:discriminants_i}. 
By construction,  this system admits convergent solutions.  Therefore, by Theorem \ref{thm:nested_ploski},   there exist a new set of variables $z=(z_1,\ldots, z_s)$, an increasing function $\tau$,  convergent power series $z_i(x)\in\C \{x\}$  vanishing at $0$, algebraic power series 
$u_i (x^i,z)  \in\C \langle x^{i},z_1,\ldots,z_{\tau(i)}\rangle$,  
and  vectors  
$a_i(x^i,z) \in\C \langle x^{(i)},z_1,\ldots,z_{\tau(i)}\rangle^{p_i}$ 
of algebraic power series, 
such that 
\begin{itemize}
 \item
$ z_1(x),\ldots, \, z_{\tau(i)}(x) \text{ depend only on }(x_1,\ldots, x_{i})$,
\item
 $ a_i(x^i,z),\, u_i (x^i,z)  \text{ are solutions of  \eqref{eq: polynomials:f_i} and \eqref{eq:discriminants_i} }$, 
\item 
$ a_i(x^i)= a_i(x^i,z(x^i)),  
u_i(x^i)= u_i(x^i,z(x^i))$.  
\end{itemize}
It is essential that the new solutions $u_i(t,x^i), a_{i,j}(t,x^i)$ depend only on the first $i$ variables in $x$.  Thanks to this property it is easy to check that the one parameter deformation $t\to \{(t,x); F(t,x)=0\}$ of 
$$
 F(t, x) = x_n^{p_n}+ \sum_{j=1}^{p_n} a_{n-1,j} ( x^{n-1}, tz(x^{n-1})) x_n^{p_n-j}  
$$
is Zariski equisingular.  
Because $F(1,x)=f(x)$ and $F(0,x)$ is algebraic, we have shown 

\begin{theorem}[\cite{mostowski, BKPR, BPR}]\label{thm:ZariskitoNash}
Every analytic set germ given by one equation $f=0$, $f\in \K\{x\}$, is Zariski equisingular to a germ defined by an algebraic power series.      
\end{theorem}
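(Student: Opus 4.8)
The whole construction needed is essentially recorded above, so the plan is to organize it into the clauses of the definition of Zariski equisingularity and then invoke Varchenko's theorem. Recall that the cascade $f_n, f_{n-1}, \ldots, f_1$ built from $f$ --- with $f_n$ the Weierstrass polynomial of $f$ and each $f_i$ the Weierstrass polynomial of the first non-vanishing generalized discriminant $\D_{i+1,l_{i+1}}(a_i)$ of $f_{i+1}$ --- packages its coefficients $a_{i-1,j}$ and units $u_i$ as a convergent solution of the system \eqref{eq: polynomials:f_i}, \eqref{eq:discriminants_i}. The relations in this system are \emph{polynomial} identities in the $a_i$, since the generalized discriminants are universal polynomials in the coefficients. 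First I would feed this system, together with its nesting $a_i, u_i \in \C\{x^i\}$, into the algebraic power series version of the nested theorem (Theorem~\ref{thm:nested_ploski}). This yields algebraic power series $a_i(x^i, z), u_i(x^i, z)$ solving the same system, convergent $z_k(x)$ with $z_1, \ldots, z_{\tau(i)}$ depending only on $x_1, \ldots, x_i$, and the specialization $a_i(x^i) = a_i(x^i, z(x^i))$, $u_i(x^i) = u_i(x^i, z(x^i))$.

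Next I would scale the approximating variables to obtain a one-parameter family ($\ell = 1$), setting
$$F(t,x) = x_n^{p_n} + \sum_{j=1}^{p_n} a_{n-1,j}\big(x^{n-1}, t\,z(x^{n-1})\big)\, x_n^{p_n-j}$$
and, for the lower levels, $F_i(t, x^i) := x_i^{p_i} + \sum_j a_{i-1,j}(x^{i-1}, t\,z(x^{i-1}))\, x_i^{p_i-j}$. The endpoints are immediate. At $t=1$ the specialization returns the original coefficients, so $F(1,x) = f_n(x)$ and $\{F(1,x)=0\} = \{f = 0\}$. At $t=0$ the coefficients become $a_{n-1,j}(x^{n-1}, 0)$, which are algebraic power series, so $F(0,x)$ is defined by an algebraic power series. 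It then remains to verify that $t \mapsto \{F(t,x) = 0\}$ is Zariski equisingular with respect to $t$; once this is done, Theorem~\ref{thm:Varchenko} gives topological triviality and hence the asserted Zariski equisingularity of $\{f=0\}$ to the algebraic germ $\{F(0,x)=0\}$.

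The equisingularity verification I would carry out by descending induction on the cascade. Since \eqref{eq: polynomials:f_i} and \eqref{eq:discriminants_i} hold for the algebraic solutions as identities in $(x, z)$, the substitution $z \to t\,z(x)$ preserves them; by the nesting the substituted objects depend only on $x^i$ and $t$, and one gets $\D_{i+1, l_{i+1}}\big(a_i(\cdot, tz(\cdot))\big) = u_i(\cdot, tz(\cdot))\, F_i(t, \cdot)$ with the lower generalized discriminants still vanishing identically. Hence the first nonzero generalized discriminant of $F_{i+1}(t, \cdot)$ in the variable $x_{i+1}$ equals $F_i(t, \cdot)$ up to a unit, so the branch locus of the projection restricted to $\{F_{i+1}(t,\cdot)=0\}$ is exactly $\{F_i(t,\cdot)=0\}$ --- precisely the recursive clause (3) of the definition. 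Running this from $i=n$ down to $i=0$, where $\D_{1,l_1}$ is the nonzero constant $u_0$, verifies the definition for all $t$ simultaneously, the bounds in clauses (3)--(4) being chosen uniformly for small $t$.

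The main obstacle --- and the reason the nested theorem, rather than plain Artin or P\l oski approximation, is indispensable --- is the requirement that each coefficient $a_{i-1,j}(x^{i-1}, t\,z(x^{i-1}))$ depend only on $x^{i-1}$ and $t$. Only then is $F_i(t, x^i)$ a genuine pseudopolynomial in $x_i$ over $\C\{x^{i-1}\}$ (analytically parametrized by $t$), so that its $x_i$-discriminant reproduces $F_{i-1}(t,\cdot)$ and the cascade closes up. This nesting is supplied precisely by the property $z_k = z_k(x_1,\ldots,x_i)$ for $k \le \tau(i)$ in Theorem~\ref{thm:nested_ploski}; an unnested algebraization would let the deformed coefficients depend on higher variables and would break the discriminant identities. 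That such nesting is genuinely delicate --- and fails in the convergent category, by Gabrielov's example --- is exactly why the algebraic power series setting is used here. The residual points, namely that setting $z=0$ preserves algebraicity and that no roots escape or collide outside the computed branch loci as $t$ varies, are routine closure and continuity arguments once the identities above are in place.
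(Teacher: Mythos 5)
Your proposal follows essentially the same route as the paper: the same cascade of Weierstrass pseudopolynomials and first non-vanishing generalized discriminants, the same application of the nested Artin--P{\l}oski--Popescu theorem (Theorem \ref{thm:nested_ploski}) to the system \eqref{eq: polynomials:f_i}, \eqref{eq:discriminants_i}, and the same deformation $z \mapsto t\,z(x)$ with algebraic fiber at $t=0$, the nesting being exactly what makes the discriminant identities close up level by level. One small correction: Varchenko's theorem (Theorem \ref{thm:Varchenko}) is not needed for this statement --- your inductive verification already yields the Zariski equisingularity asserted, and Varchenko's theorem deduces topological triviality \emph{from} equisingularity, not the reverse.
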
 

\begin{remark}
The first proof of algebraicity of analytic set germs was given by Mostowski  \cite{mostowski}.  
At that time Popescu's theorem was not yet available.  Instead, Mostowski proposes an ingenious  recursive construction  of the system of equations \eqref{eq: polynomials:f_i} and \eqref{eq:discriminants_i} giving Zariski equisingularity conditions by local linear changes of coordinates and,   at the same time, step by step,  provides the deformation to an algebraic power series solution following the recipe given by P{\l}oski \cite{ploski1974}. 
\end{remark} 

\subsection{Algebraization.  Final steps.}

Using Varchenko's theorem (Theorem \ref{thm:Varchenko}), Theorem \ref{thm:ZariskitoNash} shows that every analytic set germ is homeomorphic to the germ of an algebraic power series, and moreover by an ambient homeomorphism.  This holds not only for hypersurfaces. 
 If $(V,0)$ is given by a finite system of equations $g_s=0, s=1, \ldots,k$, $g_s\in \K\{x\}$, then we proceed as follows.  In a system of local coordinates we replace $g_s=0$ by the associated pseudopolynomials:  
   \begin{align*}
g_{s} ( x)= x_n^{r_s}+ \sum_{j=1}^{r_s} a_{n-1,s,j} (x^{n-1}) x_n^{r_s-j} ,
\end{align*}
and arrange the coefficients $a_{n-1,s,j}$ in a row vector  
$a_{n-1} \in \K\{x^{n-1}\}^{p_n}$ where $p_n:=\sum_s r_s$.  
Then apply the previous construction to $f_n$ being the product of the $g_s$'s. 
After solving the system of equations for $t\in \K$ we define 
\begin{align*}
 F_n(t,x) = \prod_s G_s(t,x)  , \quad G_{s} (t, x)=  x_n^{r_s}+ 
 \sum_{j=1}^{r_s} a_{n-1,s,j} (x^{n-1}, t z(x^{n-1})) x_n^{r_s-j}  . 
\end{align*}
   Then Varchenko's proof of Theorem \ref{thm:Varchenko} provides a topological trivialization that preserves the zero sets of $G_s(x,t)=0, s=1, \ldots,k$, and thus the ambient homeomorphism between $(V,0)$ and $\{G(x,0)_s=0, s=1, \ldots,k\}$. 

Finally,  the following theorem due to Bochnak and Kucharz \cite{bochnakkucharz1984}, based on  Artin-Mazur Theorem,  gives an equivalence, up to a Nash diffeomorphism, between the zeros of algebraic power series (or equivalently germs of Nash functions) and the local zeros of polynomial functions. 

 \begin{theorem}\label{thm:bochnakkucharzbetter}
Let $\K = \R$ or $\C$.  Let  $g_s : (\K^{n},0)\to (\K, 0)$,  be a finite family of Nash function germs.  
Then there is a Nash diffeomorphism $ h : (\K^n,0) \to (\K^n,0)$ and analytic (even Nash) units 
$u_s : (\K^{n},0)\to \K$, $u_s(0)\ne 0$, such that for all $s$,  $u_s(x) g_s (h(x))$  are 
germs of polynomials.    
\end{theorem}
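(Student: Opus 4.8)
The plan is to manufacture the diffeomorphism $h$ from an auxiliary \emph{algebraic} model of the germs, produced by the Artin--Mazur theorem, and then to read off the required polynomials from the characteristic polynomial of a generic linear projection of that model.

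First I would apply the Artin--Mazur theorem simultaneously to $g_1,\ldots,g_k$: there exist Nash germs $\phi_{k+1},\ldots,\phi_N$ such that, writing $\phi=(g_1,\ldots,g_k,\phi_{k+1},\ldots,\phi_N):(\K^n,0)\to(\K^N,0)$, the graph $V:=\{(x,y)\,:\,y=\phi(x)\}$ is the germ of a nonsingular algebraic subset of $\K^n\times\K^N$ of dimension $n$, on which the projection $p:V\to\K^n$, $(x,y)\mapsto x$, is an \'etale (Nash isomorphism) germ. Let $\sigma=p^{-1}$ be the associated Nash section; then each $g_s$ is the coordinate function $y_s$ restricted to $V$ and read through $\sigma$, that is, $g_s=(y_s|_V)\circ\sigma$, and moreover $g_s\circ p=y_s|_V$ on $V$. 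Fix an affine algebraic representative of $V$, still denoted $V$.

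Next I would choose a generic linear projection $r:\K^n\times\K^N\to\K^n$ whose restriction $r|_V$ is finite (Noether normalization) and \'etale at $0$; genericity also lets me require that the sheet of this finite cover passing through $0$ is the only one meeting each hypersurface $\{y_s=0\}$ over the origin. Writing $\rho:=(r|_V)^{-1}:(\K^n,0)\to(V,0)$ for the Nash inverse and $w$ for the coordinates on the target of $r$, I set
\[
h:=p\circ\rho:(\K^n,0)\to(\K^n,0),
\]
a Nash diffeomorphism germ. Since $g_s\circ p=y_s|_V$, this yields $g_s\circ h=(y_s|_V)\circ\rho=:\psi_s(w)$, so the whole problem reduces to showing that each Nash germ $\psi_s$ is a polynomial times a Nash unit. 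Because $r|_V$ is finite, $y_s|_V$ is integral over $\K[w]$; let $\mu_s(w,T)=T^d+c_{d-1}(w)T^{d-1}+\cdots+c_0(w)\in\K[w][T]$ be the monic characteristic polynomial of multiplication by $y_s|_V$. As $\psi_s$ is a genuine (Nash) root of $\mu_s(w,\cdot)$, dividing out the linear factor gives $\mu_s(w,T)=(T-\psi_s(w))\,\nu_s(w,T)$ with $\nu_s$ monic in $T$ and Nash in $w$. Evaluating at $T=0$ gives $c_0(w)=\mu_s(w,0)=-\psi_s(w)\,\nu_s(w,0)$. The clean-branch genericity of the previous step is exactly what forces $\nu_s(0,0)\neq 0$, so $u_s:=\nu_s(w,0)$ is a Nash unit and
\[
u_s(w)\,g_s(h(w))=u_s(w)\,\psi_s(w)=-c_0(w)\in\K[w],
\]
a polynomial, which is the claim after relabelling $w$ as $x$.

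The main obstacle is precisely the multiplicity bookkeeping hidden in the clean-branch condition. A priori the fibre of $r|_V$ over the origin may contain other sheets that also meet some $\{y_s=0\}$; then $\nu_s(0,0)=0$ and $\psi_s$ is only a ratio of polynomials rather than a polynomial times a unit. Proving that a single generic linear $r$ can isolate, for \emph{every} $s$ at once, the branch through $0$ on each $\{y_s=0\}$ --- equivalently, that the zero divisor of $g_s\circ h$ equals that of the algebraic function $c_0(w)$ --- is the heart of the argument, and is where the nonsingularity of $V$ together with a dimension count on $V\cap\ker r$ enters. The only other delicate point is the very first reduction, since one must apply Artin--Mazur to all the $g_s$ simultaneously so as to obtain one nonsingular model $V$ with \'etale projection $p$ serving the entire family.
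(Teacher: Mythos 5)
The paper itself contains no proof of this theorem: it is quoted from Bochnak--Kucharz \cite{bochnakkucharz1984} with only the remark that it rests on the Artin--Mazur theorem, so your attempt can only be compared with that cited argument, and your route --- simultaneous Artin--Mazur model $V$, finite generic linear projection $r$, characteristic polynomial $\mu_s$ of $y_s|_V$ over $\K[w]$, unit $u_s=\nu_s(w,0)$ --- is indeed the expected one. The skeleton is sound, up to small repairs you should make explicit: translate so that $\phi(0)=0$ (the auxiliary components $\phi_j$ need not vanish at $0$); replace $V$ by its irreducible component through $0$ (unique, by nonsingularity there) so that $\K[V]$ is a domain and $\mu_s\in\K[w][T]$ is well defined via the field extension $K(V)/K(w)$; and justify that the root $T=0$ of $\mu_s(0,T)$ has multiplicity exactly one, which does not follow formally from étaleness at $0$ alone --- one factors $\mu_s$ over the henselization of $\K[w]_{(w)}$, where $\K[V]\otimes\K[w]^{h}$ splits as a product of local rings indexed by the fiber points, the origin contributing the factor $T-\psi_s(w)$ by étaleness and every other factor specializing to $(T-y_s(q))^{d_q}$; your clean-branch condition then gives $\nu_s(0,0)\neq 0$.

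The step you flag as the heart --- and leave unproven --- is a genuine gap as written, but it is more routine than you fear, and it does not require any analysis of zero divisors of $g_s\circ h$ versus $c_0$ (local non-vanishing of the cofactor at the single point $0$ suffices). Set $W_s:=V\cap\{y_s=0\}$, of dimension $\le n-1$ when $g_s\not\equiv 0$ (the case $g_s\equiv 0$ being trivial). The cone with vertex $0$ over $W_s$ is algebraic of dimension $\le n$, so its projectivization has dimension $\le n-1$ in $\proj^{n+N-1}$; since $(N-1)+(n-1)<n+N-1$, a generic projectivized kernel $\proj(\ker r)\cong\proj^{N-1}$ misses it, i.e.\ $\ker r\cap W_s=\{0\}$, and this holds simultaneously for the finitely many $s$. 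The same dimension count gives finiteness of $r|_V$ (the directions of $\ker r$ must miss $\overline{V}\cap H_\infty$, of dimension $\le n-1$) and étaleness at $0$ ($\ker r\cap T_0V=0$). For $\K=\R$ one must run all of this on the complexification --- the roots of $\mu_s(0,T)$ come from \emph{complex} fiber points, so $c_0$ could vanish because of complex sheets your real picture does not see --- and then use that the real Grassmannian is Zariski dense in the complex one, so a generic real kernel satisfies the complex genericity conditions. With these two paragraphs supplied, your argument closes; as submitted, the key genericity is asserted rather than proven.
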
  

\subsection{Algebraization of function germs.} \label{ssec:functions}

The algebraization of analytic set germs can be extended to analytic function germs, 
that is, mappings with values in $\K$.  The following theorem was proven in \cite{BPR}.  
 
\begin{theorem} \label{homeotopolynomial}  
Let  $\K = \R$ or $\C$.  
 Let  $g: (\K^n,0)\to (\K, 0)$ be an analytic function germ.  
 Then there is a homeomorphism $\sigma : (\K^n,0) \to (\K^n,0)$ such that $g\circ \sigma$ 
 is the germ of a polynomial.  
\end{theorem}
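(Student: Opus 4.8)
The plan is to upgrade the set-germ algebraization (Theorem \ref{thm:ZariskitoNash} together with Theorem \ref{thm:bochnakkucharzbetter}) to a statement about the function itself, where we are no longer allowed to multiply by units and must preserve the value $g(x)$. The device for this is to introduce an auxiliary \emph{value variable} $w$ and to work with the function germ $G(x,w):=g(x)-w$ on $(\K^{n+1},0)$, whose fibers over the projection $(x,w)\mapsto w$ are exactly the level sets $g^{-1}(w)$. Straightening $g$ to a polynomial up to right equivalence is then the same thing as trivializing the family of level sets by homeomorphisms that preserve $w$.

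First I would run the construction preceding Theorem \ref{thm:ZariskitoNash} on $G$ rather than on $g$, but building the tower of pseudopolynomials by eliminating only the variables $x_n,\ldots,x_1$ and using linear changes of coordinates in the $x$'s alone, never mixing in $w$. Thus $w$ is kept as a protected bottom variable, and the generalized discriminants and the equations \eqref{eq: polynomials:f_i}--\eqref{eq:discriminants_i} now carry $w$ as an extra parameter. Applying the nested approximation theorem (Theorem \ref{thm:nested_ploski}) to this system---whose nested structure is precisely what lets us retain the dependence on the protected variable---should produce a one-parameter deformation $G_t(x,w)=g_t(x)-w$ with $g_1=g$, with $g_0$ a Nash (algebraic power series) germ, with $g_t(0)=0$ for all $t$, and with $\{G_t=0\}$ Zariski equisingular with respect to $t$.

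Next I would invoke Varchenko's theorem (Theorem \ref{thm:Varchenko}). Because $w$ sits at the bottom of the tower and is never eliminated, the trivialization it produces can be taken of the triangular form $\Sigma_t(x,w)=(\sigma_t(x,w),w)$, i.e.\ it preserves the value coordinate and carries $\{w=g_0(x)\}$ onto $\{w=g_t(x)\}$. Restricting to graphs and setting $h_t(x):=\operatorname{pr}_x\Sigma_t(x,g_0(x))$ then yields homeomorphism germs of $(\K^n,0)$ with $g_t\circ h_t=g_0$; at $t=1$ this gives a homeomorphism $\sigma_0$ with $g\circ\sigma_0=g_0$ an algebraic (Nash) function germ.

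The remaining step, which I expect to be the hardest, is to pass from the Nash germ $g_0$ to an honest polynomial by a homeomorphism that still preserves the value. Here Theorem \ref{thm:bochnakkucharzbetter} cannot be used as in the set case, since it only makes the \emph{zero set} algebraic by multiplying by a unit $u_s$, which alters the function; and making the graph $\{w=g_0(x)\}$ into a polynomial hypersurface does not make the implicitly defined value function polynomial. The plan is instead to connect $g_0$ to a polynomial---for instance to a sufficiently high truncation of $g_0$, using that $g_0$ is algebraic so that its defining data are polynomial and that (via the Artin--Mazur mechanism underlying Theorem \ref{thm:bochnakkucharzbetter}) the equisingularity type is finitely determined---and to trivialize this second family by the same $w$-preserving argument as above, which now applies verbatim since every member is already Nash and no further use of Theorem \ref{thm:nested_ploski} is needed. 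Composing the two value-preserving homeomorphisms produces $\sigma$ with $g\circ\sigma$ a polynomial. The crux throughout is the strict preservation of the value variable $w$: this is exactly what distinguishes the function statement from the set statement and what forbids the unit factors permitted in Theorem \ref{thm:bochnakkucharzbetter}.
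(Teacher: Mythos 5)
Your Steps 1--3 are essentially the paper's own argument (Subsection \ref{ssec:functions}, following \cite{BPR}): the paper also passes to the graph, writing $F(t,x_1,\ldots,x_n)=x_1-g(t,x_2,\ldots,x_n)$ --- your auxiliary variable $w$ is exactly the paper's protected coordinate $x_1$ --- then applies the nested approximation theorem to the discriminant tower built without touching that coordinate, and uses the fact that Varchenko's trivialization is of the triangular form \eqref{eq:preservex_1}, so that restricting to graphs via $\sigma_t(y)=\hat h_t(g(y),y)$ gives $g_t\circ\sigma_t=g_{t_0}$. One detail you elide here: protecting $w$ in the tower does not by itself guarantee that the deformed hypersurface is again a graph, i.e.\ that $G_t(x,w)=g_t(x)-w$; for that one must feed the Weierstrass-type identity expressing $g(x)-w$ as a unit times the top pseudopolynomial into the system of equations handed to Theorem \ref{thm:nested_ploski}, so that the approximated solutions reproduce the graph structure. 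This is a repairable technicality, and your instinct about the nested structure is right in spirit.

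The genuine gap is in your final step, which is also where you diverge from the paper. The paper completes the passage from the Nash germ $g_0$ to a polynomial by using Theorem \ref{thm:bochnakkucharzbetter} again (the unit problem you correctly identify is dealt with in \cite{BPR} through the Artin--Mazur presentation underlying that theorem, which lets one write the Nash germ in terms of polynomial data with an algebraic graph, rather than by discarding the theorem). Your substitute --- connect $g_0$ to a high truncation and claim the connecting family is Zariski equisingular because ``the equisingularity type is finitely determined'' --- assumes exactly what needs to be proved. Zariski equisingularity is a property of the whole tower of generalized discriminants, and the discriminants of the truncated family are not truncations of the original discriminants: the vanishing pattern \eqref{eq:discriminants_i} (which $\D_{i+1,l}$ vanish identically, and the orders of the first nonvanishing ones) can jump under a perturbation of the coefficients, however high its order, so no finite-determinacy statement for this notion is available off the shelf. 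For the same reason your claim that the trivialization ``applies verbatim since every member is already Nash and no further use of Theorem \ref{thm:nested_ploski} is needed'' is unsupported: one would still have to construct and control the tower for the new one-parameter family, and this is precisely the hard content that \cite{BPR} (and, for higher-order approximation of sets, \cite{BKPR}) obtain by producing solutions of the \emph{same} tower equations via the approximation theorem rather than by truncating. To your credit, the observation that Theorem \ref{thm:bochnakkucharzbetter} cannot be applied naively to functions --- units destroy right-equivalence, and algebraizing the graph as a set does not polynomialize the implicitly defined function --- is correct and is exactly why this last step requires the more careful treatment deferred to \cite{BPR}.
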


The idea of adapting Zariski equisingularity to the function case comes from \cite{varchenko1972}.  
Given a family of analytic function germs 
$g_t(y) = g(t, y_1, \ldots , y_{n-1}):(\K ^{n-1},0)\to (\K, 0)$  parameterized by $t\in \K^\ell$, the idea is to consider the associated family of set germs defined by the graph of $g$, the zero set of $F(t,x_1, \ldots, x_n) := x_1 - g(t, x_2, \ldots , x_{n})$.  If the family 
$V=V(F)$ is Zariski equisingular, with respect to the system of coordinates 
$x_1, \ldots, x_n$, then the trivialization constructed in \cite{varchenko1972} does not move the variable $x_1$
\begin{align}\label{eq:preservex_1}
h_t (x_1, \ldots x_n) = (x_1 , \hat h_t (x_1, x_2, \ldots, x_n)).   
\end{align} 
Set $\sigma_t (y) := \hat h_t (g(y), y) $.  
Then 
$$
g_t\circ \sigma_t = g_{t_0}, 
$$  

To complete the passage from algebraic power series to polynomials one uses again Theorem 
\ref{thm:bochnakkucharzbetter}.  See \cite{BPR} for details. 



\section{Algebraization of the germ of a meromorphic function in dimension 2}
As said before, every two variables analytic function germ is weakly finitely determined, so it is analytically equivalent to a polynomial function germ. The case of meromorphic function germs has been studied in \cite{CM}, where it is shown that, in contrast to the  analytic function germs, a two variable meromorphic germ is not always weakly finitely determined. But the question whether a two variable meromorphic function germ is analytically equivalent to a rational function germ remains open. The following result has been proved recently: 

\begin{theorem}\cite{GM}
Let $\K=\R$ or $\C$. Let $\phi$ be the germ of a meromorphic function at $(\K^2,0)$. Then there is the germ of an analytic diffeomorphism $h:(\K^2,0)\lgw (\K^2,0)$ such that $\phi\circ h$ is the germ of an algebraic meromorphic function, that is, $\phi\circ h={f}/{g}$ where $f$, $g\in\C\langle x_1,x_2\rangle$.\\
Moreover, for any $k\in\N$, we may assume that $h(x)-x\in(x)^k$.
\end{theorem}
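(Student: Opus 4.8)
The plan is to first reduce to a coprime presentation and then to algebraize the numerator and denominator \emph{simultaneously}, encoding $\phi$ through the germ of its graph so that no ambiguity survives from a common analytic unit. Since $\K\{x_1,x_2\}$ is a unique factorization domain, I would write $\phi=P/Q$ with $P,Q\in\K\{x_1,x_2\}$ coprime, and, after a generic linear change of coordinates, arrange that $PQ$ is $x_2$-regular, so that $P$ and $Q$ are, up to units, Weierstrass polynomials in $x_2$. What has to be produced is a near-identity analytic diffeomorphism $h$ together with algebraic $f,g\in\C\langle x_1,x_2\rangle$ and, implicitly, a common analytic unit $U$ with $P\circ h=Uf$ and $Q\circ h=Ug$, so that $\phi\circ h=f/g$.

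It is instructive to see why the naive route fails, as this isolates the real difficulty. Applying weak finite determinacy (Kucharz) to the product $PQ$ only yields a diffeomorphism realizing $(PQ)\circ h=fg$; tracking branches gives $P\circ h=uf$ and $Q\circ h=vg$ with $uv=1$, hence $\phi\circ h=(u/v)\,f/g=u^{2}\,f/g$ with an \emph{uncontrolled} analytic unit $u$. This is exactly the failure of weak finite determinacy for meromorphic germs recorded in \cite{CM}, and it shows that one cannot perturb $P$ and $Q$ independently. To bypass it I would work with the germ of the graph $V=\{F=0\}\subset(\K^{3},0)$, where $F(w,x_1,x_2):=wQ-P$. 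The key observation is that $\{w(UQ)-UP=0\}=\{wQ-P=0\}$, so $V$ depends only on $\phi$ and not on the choice of representative $(P,Q)$; consequently, algebraizing $V$ by a diffeomorphism that does \emph{not} move $w$ automatically returns a genuine ratio of algebraic series, and the unit problem disappears.

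I would then run the function-case construction of Section~\ref{ssec:functions} on $V$, with $w$ playing the role of the preserved coordinate $x_1$ of \eqref{eq:preservex_1}: build the distinguished pseudopolynomials in the source variables $x_2,x_1$ and the generalized discriminant relations \eqref{eq:discriminants_i}, which now encode the zeros and poles of $\phi$ fibrewise in $w$, and apply the nested Artin--P\l oski--Popescu theorem (Theorem~\ref{thm:nested_ploski}) to algebraize all the coefficients while keeping the nested dependence on the variables. This yields a one-parameter family $F(t,w,x_1,x_2)$ with $F(1,\cdot)=F$, $F(0,\cdot)$ algebraic of the form $w\tilde Q-\tilde P$, Zariski equisingular with respect to $t$ and preserving $w$. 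Choosing the algebraic solution to agree with the original data modulo $(x)^{N}$, with $N\gg k$, makes the family constant modulo $(x)^{N}$; a trivialization that does not move $w$ then gives $\phi_t\circ\sigma_t=\phi_0$ in the manner of Section~\ref{ssec:functions}, so that $h:=\sigma_1^{-1}$ satisfies $\phi\circ h=\tilde P/\tilde Q$ with $\tilde P,\tilde Q\in\C\langle x_1,x_2\rangle$, and the high-order constancy forces $h-\mathrm{id}\in(x)^{k}$.

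The step I expect to be the main obstacle is obtaining this trivialization as an \emph{analytic} diffeomorphism rather than the mere homeomorphism provided by Varchenko's Theorem~\ref{thm:Varchenko}. In the three ambient variables of the graph the surface $V$ may carry a one-dimensional (line) singularity, so $F$ is not finitely determined and no analyticity is available off the shelf; this is precisely the gap between the general set-germ result, which yields only homeomorphisms, and the statement to be proved. Overcoming it is where the two-variable hypothesis is indispensable: the equisingularity data of the family reduce to the equisingularity of the plane-curve configuration formed by the branches of $P$ and $Q$, and for such families, together with the fact that the deformation is constant to high order, one can integrate a genuinely analytic (not merely Lipschitz) vector field tangent to the family and annihilating $w$. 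Producing this analytic trivialization, and checking that it respects the graph structure so that the limit is the honest ratio $\tilde P/\tilde Q$, is the technical heart of the argument of \cite{GM}.
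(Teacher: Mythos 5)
Your reduction to the graph $\{wQ-P=0\}$ and the appeal to Zariski equisingularity is a genuinely different route from the paper's, but it has a real gap at exactly the step you flag and then defer: upgrading the trivialization to an \emph{analytic} diffeomorphism. Zariski equisingularity yields only topological trivializations (Theorem \ref{thm:Varchenko}), and your proposed fix --- that in two variables the equisingularity of the plane-curve configuration of branches, together with constancy of the family modulo $(x)^N$, lets one integrate an analytic vector field --- is not available: equisingular families of plane curves are in general \emph{not} analytically trivial (analytic moduli vary along topologically trivial families, e.g.\ $x_1^4+x_2^4+t\,x_1^2x_2^2$), and agreement to arbitrarily high order cannot force analytic equivalence of meromorphic germs, since that is precisely the failure of weak finite determinacy recorded in \cite{CM} which you yourself invoke to dismiss the naive route. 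The same obstruction that kills the naive argument reappears, unresolved, at the level of your family; so the technical heart of the proof is missing rather than merely outsourced to \cite{GM}.

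The paper's proof does not use Zariski equisingularity or the nested Theorem \ref{thm:nested_ploski} at all. It associates to $\phi=f/g$ the $1$-form $\theta=\frac{f_1\cdots f_p g_1\cdots g_q}{fg}(g\,df-f\,dg)$, identifies its divisors with the special fibers of the pencil via Lemma \ref{lem:GM1} (namely $f-c_ig=h_i^{\mu_i+1}\rho_i$), and divides them out to get a form $\omega$ with an \emph{isolated} singularity admitting $\phi$ as first integral. The plain P\l oski theorem (Theorem \ref{thm:ploski-app}) applied to the system \eqref{equations} encoding these factorizations produces an algebraically parametrized family through the data, and the deformation parameter comes simply from truncating $z(x)$ at order $k_0$ (this also gives $h(x)-x\in(x)^k$). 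The analyticity you lack is then supplied by the Cerveau--Mattei rigidity lemma \cite[Lemme 2.1]{CM}: any \emph{integrable} $1$-form agreeing with $\omega$ to high order is carried to $u\omega$ by a fibered biholomorphism --- isolated singularity of the $1$-form plays the role that finite determinacy cannot play for the meromorphic germ itself; note your deformation would in any case have to preserve integrability for such a rigidity statement to apply, which the paper guarantees by deforming through honest meromorphic functions $\Phi(x,t)$. The conclusion then needs one more idea absent from your proposal: the description of all first integrals of $\Omega_1$ as $\C(\kappa)$ \cite[Thm 1.1]{CM}, from which algebraicity of $\Phi(x,1)$ forces $\kappa$, hence $\phi\circ\Psi_1(\cdot,1)$, to be algebraic.
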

\begin{proof}[Sketch of proof]
The main case is $\K=\C$. We consider this case now. The idea is to associate to $\phi$ a germ of analytic 1-form $\omega$, and then to use 
P{\l}oski theorem to construct an analytic deformation of $\o$ that has one algebraic fiber. Then we use a result of D. Cerveau and J.-F. Mattei to prove that this deformation is 
analytically trivial. Let us give more details.\\

{\bf Step 1.} Let us write $\phi={f}/{g}$ where $f$ and $g$ are coprime convergent power series. Let us write
$$f=f_1^{\ell_1}\cdots f_p^{\ell_p}\text{ and } g=g_1^{k_1}\cdots g_q^{k_q}$$
where the $f_i$ and $g_j$ are irreducible convergent power series and the $\ell_i$ and $k_j$ are positive integers.
We set
$$\theta:=\frac{f_1\cdots f_p g_1\cdots g_q}{fg}(gdf-fdg).$$
Then $\theta$ is a holomorphic 1-form and each of its analytic divisors $h\in\C\{x\}$ is coprime to $fg$ (here $x=(x_1,x_2)$). Let us recall that a \emph{divisor of $\theta$} is a common divisor of the coefficients of $dx_1$ and $dx_2$ in the expansion of $\theta$. Then we can prove the following lemma:
\begin{lemma}\cite[Prop. 3.2]{GM}\label{lem:GM1}
Let $h\in\C\{x\}$ be an irreducible convergent power series. Then $h$ divides $\theta$ if and only if there is $c\in\C$ such that $h$ divide $f-cg$. In this case, if $\mu$ is the largest power of $h$ dividing $\theta$, then $\mu+1$ is the largest power of $h$ dividing $f-cg$.
\end{lemma}

Denote by $h_1$, \ldots, $h_e$ the irreducible divisors of $\theta$, and by $\mu_1$, \ldots, $\mu_e\in\N^*$ the maximal exponents such that $h_1^{\mu_1}\cdots h_e^{\mu_e}$ divides $\theta$. We define 
$$\o:=\frac{1}{h_1^{\mu_1}\cdots h_e^{\mu_e}}\theta.$$ This is a holomorphic 1-form with (at most) an isolated singularity (let us recall that the singular locus of $\o$ is the zero locus of the coefficients of $dx_1$ and $dx_2$ in $\o$). We recall that a \emph{first integral of $\o$}  is a meromorphic function $R$ such that $\o\wedge dR=0$ ;  so $\phi$ is a first integral of $\o$.

We denote by $c_1$, \ldots, $c_e\in\C$ the complex numbers corresponding to the $h_i$ in Lemma \ref{lem:GM1}, that is, $f-c_ig=h_i^{\mu_i+1}\rho_i$ for some convergent power series $\rho_i$.\\

{\bf Step 2.} 
We consider the following system of equations:
\begin{equation}\label{equations}\tag{S}
\left\{\begin{array}{ccc}
y_{1,1}^{\ell_1}\cdots y_{1,p}^{\ell_p}-c_1y_{2,1}^{k_1}\cdots y_{2,q}^{k_q}&=&y_{3,1}^{\mu_1+1}y_{4,1}\\
y_{1,1}^{\ell_1}\cdots y_{1,p}^{\ell_p}-c_2y_{2,1}^{k_1}\cdots y_{2,q}^{k_q}&=&y_{3,2}^{\mu_2+1}y_{4,2}\\
\vdots & & \\
y_{1,1}^{\ell_1}\cdots y_{1,p}^{\ell_p}-c_ey_{2,1}^{k_1}\cdots y_{2,q}^{k_q}&=&y_{3,e}^{\mu_e+1}y_{4,e}
\end{array}\right.
\end{equation} 
By assumption
$$y(x):=(f_1(x), \ldots, f_p(x), g_1(x),\ldots, g_q(x), h_1(x), \ldots, h_e(x), \rho_1(x),\ldots, \rho_e(x))$$
is a solution of \eqref{equations}. By Theorem \ref{thm:ploski-app}, there exists a vector of algebraic power series $y(x,z)\in\C\langle x,z\rangle$, a solution of \eqref{equations}, and convergent power series $z_1(x)$, \ldots, $z_s(x)\in\C\{x\}$ such that
$$y(x)=y(x,z(x)).$$
We denote by
$$f_1(x,z), \ldots, f_p(x,z), f_1(x,z),\ldots, g_q(x,z), h_1(x,z), \ldots, h_e(x,z), \rho_1(x,z),\ldots, \rho_e(x,z)$$
the components of $y(x,z)$. Let $k_0\in\N$. For $t\in\left[0,1\right]$, we set
$$z(x,t)=z_{k_0}(x)+(1-t)r_{k_0}(x)$$
where $z_{k_0}(x)$ is the truncation of $z(x)$ at order $k_0$ and $r_{k_0}(x)=z(x)-z_{k_0}(x)$. So $z(x,0)=z(x)$ and $z(x,1)$ is a polynomial. We set
$$F_i(x,t):=f_i(x,z(x,t)),\ G_j(x,t):=g_j(x,z(x,t)),\ H_k(x,t):=h_k(x,z(x,t))$$
and
\begin{align*}
 & F(x,t):=F_1(x,t)^{\ell_1}\cdots F_p(x,t)^{\ell_p},\ G(x,t):=G_1(x,t)^{k_1}\cdots G_q(x,t)^{k_q}, \\ 
 & H(x,t)=H_1(x,t)^{\mu_1}\cdots H_e(x,t)^{\mu_e} , \\ 
& \Phi(x,t):=\frac{F(x,t)}{G(x,t)}.  
\end{align*}
We see that $\Phi(x,0)=\phi(x)$, and $\Phi(x,1)$ is an algebraic meromorphic function germ. We set 
$$\Theta:=\frac{F_1\cdots F_p G_1\cdots G_q}{FG}(GdF-FdG)\ \text{ and }\  \Omega:=\frac{\Theta}{H_1^{\mu_1}\cdots H_e^{\mu_e}}.$$

{\bf Step 3.} We denote by $\Omega_\tau$ the restriction of $\Omega$ to the plane of equation $t=\tau$.  Then $\Omega_0=\o$ has an isolated singularity at 0 and $\Omega_1$ is a 1-form having $\Phi(x,1)$ as a first integral.\\
In fact, one can prove that $\Omega_t$ has an isolated singularity for any $t\in[0,1]$ if $k_0$ is chosen large enough (see \cite{GM} for more details). This comes from the fact that the coefficients of $dx_1$ and $dx_2$ in $\Omega_t$ coincide with those of $\Omega_0=\o$ up to order $\geq k_0$. Then we use the following lemma:

\begin{lemma}\cite[Lemme 2.1, p. 149]{CM}
Let $\o$ be an analytic 1-form germ with an isolated singularity at $0\in\C^n$. Then there is an integer $N$ such that, for every integrable analytic 1-form germ $\Omega$ at $(\C^{n+m},0)$ satisfying
\begin{enumerate}
\item $\Omega_0=\o$,
\item $\Omega-\o\in (x)^N$,
\end{enumerate}
there is a germ of biholomorphism $\Psi$, of the form $\Psi(x,t)=(\Psi_1(x,t),t)$, and a unit $u\in\C\{x,t\}$, such that
$$\Psi_*\Omega=u\o.$$
\end{lemma}

Therefore, if $k_0\geq N$, we have that $\Psi_*\Omega=u\o$ and $\phi\circ\Psi_1(x,1)$ is a first integral of $\Omega_1$.  
Now we use \cite[Thm 1.1, p. 137]{CM} which asserts that the set of first integrals of a meromorphic 1-form having a meromorphic first integral has the form 
$$\C(\kappa)=\{\gamma\circ\kappa\mid \gamma\in \C(T)\}.$$
We can apply this to $\Omega_1$: let $\kappa$ be such a meromorphic function, so $\Phi(x,1)=R(\kappa)$ for some rational function $R\in\C(T)$. Since $\Phi(x,1)$ is algebraic, it satisfies $P(x,\Phi(x,1))=0$ for some nonzero polynomial $P(x,y)$. So $\kappa$ annihilates $P(x,R(z))=0$. But $P(x,R(z))$ is a rational function, so $\kappa$ is a root of its numerator, thus $\kappa\in\C\langle x\rangle$. In particular, all the first integrals of $\Omega_1$ are algebraic. But $\phi\circ\Psi_1(x,1)$ is also a first integral of $\Omega_1$, so $\phi\circ\Psi_1(x,1)$ is algebraic. This proves the theorem for $\K=\C$ with $h(x):=\Psi(x,1)$.\\

The real case follows essentially in the same way, but we have to prove that at each step of the proof, the objects that intervene are real.
\end{proof}

\end{document}